\newtheorem{theorem}{Theorem}
\newtheorem*{theorem*}{Theorem}
\newtheorem{lemma}[theorem]{Lemma}
\newtheorem{definition}[theorem]{Definition}
\newtheorem{corollary}[theorem]{Corollary}
\newtheorem{proposition}[theorem]{Proposition}    \newtheoremstyle{TheoremNum}
{\topsep}{\topsep}
{\itshape}
{}
{\bfseries}
{.}
{ }
{\thmname{#1}\thmnote{ \bfseries #3}}
\theoremstyle{TheoremNum}
\newcommand{\mb}{\mathbb}
\newcommand{\mc}{\mathcal}
\newcommand{\say}[1]{\text{\hspace{1cm}#1}}
\author{Tolson Bell\thanks{thbell@cmu.edu. Research supported by NSF Graduate Research Fellowship grant DGE 2140739.}\\Carnegie Mellon University}
\date{May 2023}
\title{\vspace{-1cm}The Park--Pham Theorem\\with Optimal Convergence Rate}
\begin{document}
\maketitle

\begin{abstract}
Park and Pham's recent proof of the Kahn--Kalai conjecture was a major breakthrough in the field of graph and hypergraph thresholds. Their result gives an upper bound on the threshold at which a probabilistic construction has a $1-\epsilon$ chance of achieving a given monotone property. While their bound in other parameters is optimal up to constant factors for any fixed $\epsilon$, it does not have the optimal dependence on $\epsilon$ as $\epsilon\rightarrow 0$. In this short paper, we prove a version of the Park--Pham Theorem with optimal $\epsilon$-dependence.
\end{abstract}
\section{Introduction}
One of the most fundamental tasks in probabilistic combinatorics is finding the thresholds for graph and hypergraph properties. At what $p$ should you expect $G(n,p)$ to be more likely than not to contain a triangle? A Hamiltonian cycle? A common first attempt is to lower bound this $p$ by the first moment method. The Park--Pham Theorem essentially says that applying the first moment method on some structure that is necessary for your desired graph to appear is always within a logarithmic factor of the true threshold.

Let $H$ be a hypergraph on a finite vertex set $X$. The \underline{upward closure} of $H$ is \[\langle H\rangle=\{R\subseteq X:\exists~S\in H\text{ s.t. }S\subseteq R\},\]that is, the subsets of $X$ that contain a hyperedge in $H$. A hypergraph $G$ \underline{undercovers} $H$ if $H\subseteq\langle G\rangle$, that is, every hyperedge in $H$ contains a hyperedge in $G$.

\begin{definition}[\cite{KahnKalai}, Section 1]
Let $q\in(0,1)$. $H$ is \underline{$q$-small} if there is a hypergraph $G$ such that $H\subseteq\langle G\rangle$ and $\sum_{R\in G}q^{|R|}\le\frac 12$.
\end{definition}
Let \underline{$X_p$} denote a subset of $X$ where each element is included independently with probability $p$, let \underline{$p_c(H)$} be the probability $p_c$ such that $\mb{P}(\exists~S\in H$ s.t. $S\subseteq X_{p_c})=\frac 12$ (if $H$ is non-trivial, this must exist and be unique by monotonicity), let \underline{$\ell(H)$} be the size of the largest hyperedge of $H$, and let \underline{$q(H)$} be the maximum $q$ such that $H$ is $q$-small.

To apply the Park--Pham Theorem, $X$ should be the set of objects that are being selected independently at random, for example, the edges of $G(n,p)$ or hyperedges of a random hypergraph, and $H$ is the graph or hypergraph property you want to find the threshold of, for example, hyperedges consisting of minimal edge sets which form a triangle or Hamiltonian cycle. $\ell(H)$ is the maximal number of elements that make up one instance of that property, for example, $n$ for Hamiltonian cycles and 3 for triangles. $q(H)$ is often relatively easy to compute for structured $H$. Our main goal is to find $p_c(H)$. One motivation for the definition of $q$-small is that $q(H)\le p_c(H)$ by first moment method on the probability of some $R\in G$ being contained in $X_q$, a necessary condition for some $S\in H$ to be contained in $X_q$. The Park--Pham Theorem shows that $q(H)$ gives rise to an upper bound, as well as the lower bound, on $p_c(H)$.

\begin{theorem}[Kahn--Kalai Conjecture, now Park--Pham Theorem]\label{kkpp}
For any hypergraph $H$, \[p_c(H)\le 8q(H)\log(2\ell(H)).\]
\end{theorem}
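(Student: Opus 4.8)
The plan is to prove the following reformulation, which contains the theorem: \emph{if $\mathcal H$ is a hypergraph with $\ell(\mathcal H)\le\ell$ that is not $q$-small, then $\mathbb P\bigl(X_{8q\log(2\ell)}\text{ contains a hyperedge of }\mathcal H\bigr)>\tfrac12$.} Granting this, fix any $p>8q(H)\log(2\ell(H))$ and set $q:=p/(8\log(2\ell(H)))$; then $q>q(H)$, so $H$ is not $q$-small, so $X_p$ contains a hyperedge of $H$ with probability exceeding $\tfrac12$, hence $p_c(H)<p$; letting $p$ decrease to $8q(H)\log(2\ell(H))$ gives Theorem~\ref{kkpp}. (If $\emptyset\in H$ then $p_c(H)=0$, and if $8q(H)\log(2\ell(H))\ge1$ the bound is trivial, so these cases can be set aside.)

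\emph{Reduction and iteration.} Since ``$X_p$ contains a hyperedge of $\mathcal H$'' is an increasing event and $X_p$ stochastically dominates a union $W_1\cup\cdots\cup W_m$ of i.i.d.\ copies of $X_r$ whenever $1-(1-r)^m\le p$, I would instead work with such a union, taking $r\asymp q$ and $m\asymp\log(2\ell)$ with $mr$ just under $8q\log(2\ell)$. Reveal $W_1,\dots,W_m$ in turn, and carry after stage $i$ a ``residual family'' $\mathcal H_i$ --- morally the traces $S\setminus(W_1\cup\cdots\cup W_i)$ of the hyperedges $S\in\mathcal H$, retained only while the trace has size at most $\ell/2^i$ --- maintaining the invariant that $\mathcal H_i$ is not $q$-small. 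Each round should use one $W_i$ to halve the size bound while preserving ``not $q$-small''. After $\approx\log_2(2\ell)$ rounds the size bound falls below $1$, so $\mathcal H_m\subseteq\{\emptyset\}$; being not $q$-small it is nonempty, i.e.\ $\mathcal H_m=\{\emptyset\}$, which says exactly that some $S\in\mathcal H$ lies inside $W_1\cup\cdots\cup W_m$. A union bound over the rounds then gives the desired probability $>\tfrac12$.

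\emph{The crux, which is also the main obstacle.} Everything reduces to the single-round preservation step: if $\mathcal G$ is not $q$-small with hyperedges of size $\le L$ and $V\sim X_r$ with $r\asymp q$, then with probability $1-O(1/\log\ell)$ the size-$\le L/2$ part of $\{S\setminus V:S\in\mathcal G\}$ is again not $q$-small. The natural attack is by contradiction: from cheap covers (cost $\le\tfrac12$ at scale $q$) of the reduced family, available for many $V$, try to reassemble a cheap cover of $\mathcal G$ itself --- splicing each cover set $R$ together with the part $S\cap V$ already absorbed by $V$, and \emph{averaging over $V$} --- exploiting that replacing $q$ by $r+(1-r)q$ inflates things by only a bounded exponential $O(1)^{|S|}$ per hyperedge when $r\asymp q$. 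The constant $8$ is then pinned down so that this inflation, compounded over all $\log(2\ell)$ rounds, still fits inside the gap to $\tfrac12$ (this is also where $\log(2\ell)$, rather than $\log\ell$, enters, so that the recursion has positive budget down to $\ell=1$). The genuinely hard part is exactly here, and it is where the weaker ``$q$-small'' hypothesis bites relative to the ``$q$-spread'' condition used in earlier work: with only a cheap cover in hand and no degree bound, one must control, uniformly in $V$, both the cover sets being spliced in and the hyperedges whose trace $S\setminus V$ is too large to retain, and do so with a probability bound strong enough to survive the union bound over all $\approx\log_2(2\ell)$ rounds. I expect reproducing this averaging argument to be essentially the whole proof; the scaffolding --- the stochastic-domination reduction, the geometric shrinking of the size bound, and the base case $\ell=1$ (where not being $q$-small forces more than $1/(2q)$ singleton hyperedges, and a one-line Chernoff/union bound finishes) --- is routine.
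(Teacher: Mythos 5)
Your outer scaffolding matches the paper's: iterate $\approx\log(2\ell)$ rounds of sprinkling a small random set, halve the size bound each round, finish at the base case, and convert a union of sprinkles into $X_p$ at the end. But the heart of the proof is exactly the step you defer (``I expect reproducing this averaging argument to be essentially the whole proof''), and what you sketch for it is not an argument and not the mechanism that makes the theorem true. The paper's engine is Proposition \ref{few}: replace the plain trace $S\setminus W$ by the \emph{minimal fragment} $T(S,W)=S'\setminus W$ minimized over all $S'\in H$ with $S'\subseteq W\cup S$, and bound $\mb{E}_W\sum_{U\in\mc{U}_t}q^t<L^{-t}\binom{\ell}{t}$ by an injective encoding of the pairs $(W,T)$: specify $Z=W\sqcup T$ (at most $\binom{|X|}{Lq|X|}(Lq)^{-t}$ choices) and then recover $T$ as a $t$-subset of $\chi(Z)$ for a fixed tiebreaker $\chi:\langle H\rangle\to H$, using minimality of $T$ to guarantee $T\subseteq\chi(Z)$. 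Your version works with traces $S\setminus V$, for which no such count is available: many distinct hyperedges give distinct traces, there is no collapse onto a single small witness $\chi(Z)$, and $\mb{E}\sum_S q^{|S\setminus V|}$ can be enormous. Minimality of the fragment is not a convenience; it is what makes the counting lemma true.

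Your proposed per-round invariant is also not how the difficulty gets resolved, and as stated it has concrete problems. First, the hyperedges whose trace stays large are simply dropped from your residual family $\mathcal H_i$, so a cheap cover of the retained part says nothing about them; your contradiction argument (splice a cover of the reduced family with $S\cap V$ and average over $V$) cannot cover the discarded hyperedges at all unless you add their large traces/fragments to the cover and show that their total $q$-weight is small --- which is precisely the content of Proposition \ref{few}, not something you get for free. Second, a per-round statement with failure probability $O(1/\log\ell)$ is not available uniformly: in the late rounds (size bound $O(1)$) the natural failure probability is a constant, and the paper never proves any per-round preservation of ``not $q$-small.'' Instead it accumulates \emph{all} discarded large fragments across all rounds into one candidate cover $\mc{U}$, bounds $\mb{E}\sum_{U\in\mc{U}}q^{|U|}<\tfrac14$ using the geometric decay $8^{-t}\binom{2t-1}{t}$ across scales, applies Markov once at the end, and only then invokes ``not $q$-small'' of $H$ itself to conclude that with probability $>\tfrac12$ the set $\mc{U}$ fails to undercover $H$, forcing some $S\subseteq W$. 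So the gap is genuine: the encoding lemma with the minimal fragment and tiebreaker, together with the single end-of-process Markov bookkeeping (rather than a round-by-round union bound), is the proof, and it is absent from your proposal.
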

In this theorem and throughout the paper, all logarithms are base 2. The Park--Pham Theorem (with arbitrary constant) was conjectured by Kahn and Kalai \cite{KahnKalai}, who called it ``extremely strong'' and showed many applications of it, and was proven by  Park and Pham \cite{ParkPham}, building off previous work \cite{ALWZ,FKNP}. Theorem \ref{kkpp} will be proven in Section 2. Our proof achieves a significantly lower constant and avoids some of the complications of the original Park--Pham proof. We use similar techniques to Rao \cite{SoilOil}.

The Park--Pham Theorem we prove is equivalent to saying that for $q>q(H)$, $\ell=\ell(H)$, and $p=8q\log(2\ell(H))$, we have $\mb{P}(\exists~S\in H$ s.t. $S\subseteq X_p)>\frac 12$. But you may want to know more than just when $G(n,p)$ has a 50/50 chance of containing a triangle or Hamiltonian cycle; you may want to know when it has a .999 chance of containing these. So a natural question is to replace $\frac 12$ with $1-\epsilon$ for any $\epsilon>0$. Park and Pham also proved an $O(q(H)\log(\ell(H)))$ upper bound for any fixed $\epsilon$, but as it was not their focus, their dependence on $\epsilon$ is exponentially worse than the dependence we give. The following theorem is our main result:
\begin{theorem}\label{MainTheorem}
Let $H$ be a hypergraph that is not $q$-small and let $\epsilon\in(0,1)$. Let $p=48q\log\left(\frac{\ell(H)}{\epsilon}\right)$. Then $\mb{P}(\exists~S\in H$ s.t.~$S\subseteq X_p)>1-\epsilon$.
\end{theorem}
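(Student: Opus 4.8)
The plan is to prove the explicit estimate $\mb{P}(X_p\text{ contains no member of }\mathcal{F})\le \mathrm{poly}(\ell)\cdot 2^{-\Omega(p/q)}$ for every $\ell$-bounded family $\mathcal{F}$ that is not $q$-small; evaluating the right-hand side at $p=48q\log(\ell/\epsilon)$ and checking that the constant $48$ dominates the implied constants gives the theorem. This is the correct shape of bound: at $p\asymp q\log(\ell/\epsilon)$ it is about $\epsilon^{\Theta(1)}$, matching, e.g., the isolated-vertex obstruction in the Hamilton-cycle application, and it forces the $\epsilon$-dependence to enter only through the exponent $p/q$ together with a $\mathrm{poly}(\ell)$ prefactor --- never as a multiplicative $\log(1/\epsilon)$ sitting alongside $\log\ell$.

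I would reveal $X_p$ as a union $W_1\cup\cdots\cup W_m$ of independent $p_0$-random sets with $m\asymp\log\ell$ and $mp_0=p$, coupled so that $\bigcup_j W_j\subseteq X_p$ (legitimate since $1-(1-p_0)^m\le mp_0=p$). Writing $Z_i=W_1\cup\cdots\cup W_i$, I would track the fragment families $\mathcal{A}_i=\{\,S\setminus Z_i:S\in\mathcal{F},\ |S\setminus Z_i|\le \ell/2^i\,\}$. Then $\mathcal{A}_0=\mathcal{F}$ is not $q$-small; if ever $\emptyset\in\mathcal{A}_i$ then some $S\subseteq Z_i\subseteq X_p$ and we are done; and after the last round $\ell/2^m<1$, so $\mathcal{A}_m\ne\emptyset$ already forces $\emptyset\in\mathcal{A}_m$. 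Since a cover of $\mathcal{A}_{i+1}$ also covers the ``halved'' subfamily $\{A\setminus W_{i+1}:A\in\mathcal{A}_i,\ |A\setminus W_{i+1}|\le \ell/2^{i+1}\}$, the whole argument reduces to a single-round lemma: if $\mathcal{A}$ is not $q$-small and $\ell'$-bounded and $W\sim X_{p_0}$ with $p_0\asymp q$, then the halved family is again not $q$-small except with some probability $\psi$. I would attack this lemma by cover surgery combined with the fractional ``spread'' weighting dual to not-$q$-smallness: a hypothetical cheap cover of the halved family, together with the pieces $\{A\cap W:|A\setminus W|>\ell'/2\}$ (all of size $<\ell'/2$), covers all of $\mathcal{A}$, and one bounds the probability that this combined object is cheap --- the delicate point being to control the cost contributed by edges that $W$ fails to halve, which is where both the not-$q$-small structure and a first-moment (or short coding) estimate enter, and where I expect to relate $q$-costs to the probability that an auxiliary $q$-random set covers a given set.

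The crux, and the reason a short argument with a good constant is possible, is to combine the $m\asymp\log\ell$ rounds \emph{without} a union bound over rounds. A union bound would demand $\psi\le\epsilon/\log\ell$ at every round, which needs $p_0\gtrsim q\log(\log\ell/\epsilon)$ and hence $p\gtrsim q\log\ell\cdot\log(1/\epsilon)$ --- exactly the (near-)exponentially worse $\epsilon$-dependence we must avoid. Following Rao's coding philosophy, one instead shows that on the bad event the entire fragment transcript $(\mathcal{A}_0,\dots,\mathcal{A}_m)$ is recoverable from an encoding whose length undercuts the ``random'' length of $X_p$ by an amount that telescopes over the rounds to $\Omega(p/q)$ bits, with only $\mathrm{poly}(\ell)$ overhead for recording which fragments were chosen; equivalently, the bad event is certified by one member of a $\mathrm{poly}(\ell)$-size family of obstructions, each of probability $2^{-\Omega(p/q)}$. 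Getting this bookkeeping clean enough that the constant comes out as $48$, while keeping the proof self-contained, is where the real work lies.
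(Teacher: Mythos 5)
There is a genuine gap, and it sits exactly where you place ``the real work.'' The whole content of the theorem is the step you defer: how to run the $\Theta(\log\ell)$ halving rounds while paying only $O(q\log(\ell/\epsilon))$ in total measure, i.e.\ without demanding per-round failure probability $\epsilon/\log\ell$. Your sketch correctly identifies this as the crux and then substitutes an unspecified compression argument (``the bad event is certified by one member of a $\mathrm{poly}(\ell)$-size family of obstructions, each of probability $2^{-\Omega(p/q)}$''), conceding that making it precise is the hard part; no encoding, no obstruction family, and no single-round lemma is actually proved. The paper's mechanism is concrete and different: every round has width exactly $8q$; a round is declared a \emph{success} if the $q$-weight of its leftover fragments $\mathcal{U}_t$, $t>\ell_i/2$, is at most twice the expectation bound of Proposition \ref{few}, so by Markov each round succeeds with probability $>\frac12$ regardless of history; a failed round is simply retried without halving $\ell_i$; and, by the factor-$2$ thresholding, the total weight accumulated by successful rounds is \emph{deterministically} below $\frac12$, so the accumulated fragments can never undercover $H$. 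The only probabilistic statement left is a binomial/Chernoff tail: among $I=6\lfloor\log(\ell/\epsilon)\rfloor$ rounds there are at least $\lfloor\log\ell\rfloor+1$ successes except with probability $<\epsilon$. Nothing playing the role of this retry-plus-deterministic-weight-budget device (or a worked-out coding substitute) appears in your proposal.

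Two further problems with the scheme as written. First, fixing $m\asymp\log\ell$ rounds with $mp_0=p$ forces $p_0\asymp q\log(\ell/\epsilon)/\log\ell$, which for small $\epsilon$ is far larger than $q$, so your single-round lemma ``with $p_0\asymp q$'' does not match the rounds you actually reveal; conversely, if $p_0\asymp q$ then $m\asymp\log\ell$ rounds expose only $O(q\log\ell)$ measure and the bound cannot depend on $\epsilon$ at all --- some rounds must be allowed to fail and be repeated, which is again the paper's retry idea. Second, your cover surgery points the wrong way: to handle an edge $A$ with $|A\setminus W|>\ell'/2$ you add the piece $A\cap W$, but in the cost $\sum_R q^{|R|}$ \emph{small} sets are expensive --- each such piece costs at least $q^{\ell'/2}$ and as much as $1$ (when $W\cap A=\emptyset$), so the combined cover need not be cheap and no contradiction with non-$q$-smallness follows. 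The usable pieces are the large leftovers $A\setminus W$ (cost at most $q^{\ell'/2}$ each), and Proposition \ref{few} is precisely the counting statement bounding their expected total cost; one must also budget the threshold $\frac12$ across rounds rather than spend it in full every round. These defects are repairable, but together with the missing amortization mechanism they mean the proposal is a plan for a proof rather than a proof.
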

This bound is optimal up to constant factors, that is, has the optimal dependence on all of $\ell$, $\epsilon$, and $q$. We will prove Theorem \ref{MainTheorem} in Section 3, and then will relate it to other work in Section 4.
\section{Proof of the Park--Pham Theorem}
Let $H$ be a \underline{$\ell$-bounded} hypergraph, that is, $|S|\le\ell$ for every $S\in H$. Given a set $W$, and $S\in H$, let \underline{$T(S,W)$} be $S'\setminus W$ for $S'=argmin_{S'\in H:S'\subseteq W\cup S}|S'\setminus W|$ (break ties arbitrarily). Note that for a given $W$, we have that $\{T(S,W):S\in H\}$ undercovers $H$, as for every $S\in H$, we have $T(S,W)\subseteq S$. If $H$ is not $q$-small, then $\{T(S,W):S\in H\}$ is also not $q$-small, as any $G$ undercovering $\{T(S,W):S\in H\}$ also undercovers $H$.
\begin{proposition}[\cite{ParkPham}, Lemma 2.1]\label{few}
Let $H$ be any $\ell$-bounded hypergraph (which may or may not be $q$-small!) and $L>1$. Let $1\le t\le\ell$ and $\mc{U}_t(H,W)=\{T(S,W):S\in H,|T(S,W)|=t\}$. Let $W$ be chosen uniformly at random from $\binom{X}{Lq|X|}$. Then \[\mb{E}_W\sum_{U\in\mc{U}_t(H,W)}q^t<L^{-t}\binom\ell t.\]
\end{proposition}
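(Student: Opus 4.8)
The plan is to reinterpret $\mc{U}_t(H,W)$ directly in terms of $W$, average over a random extension of $W$, and then bound everything using only the fact that every hyperedge has at most $\ell$ vertices. First I would record the following reformulation. For $W\subseteq X$, call a set $U\subseteq X\setminus W$ a \emph{minimal completion} of $W$ if $W\cup U\in\langle H\rangle$ but $W\cup U'\notin\langle H\rangle$ for every $U'\subsetneq U$; write $\mc{M}(W)$ for the family of minimal completions. Unwinding the definition of $T(S,W)$ — in particular the minimality of the witnessing hyperedge $S'$ over all hyperedges contained in $W\cup S$ — one checks that $\mc{U}_t(H,W)=\{U\in\mc{M}(W):|U|=t\}$. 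Indeed, if $U=T(S,W)$ has size $t$ with witness $S'$, then $W\cup U=W\cup S'$ already contains $S'$, while no proper sub-extension $W\cup U'$ (with $U'\subsetneq U$) can contain a hyperedge, since such a hyperedge would lie in $W\cup S$ with residue smaller than $|U|=t$ and so beat $S'$ in the $\arg\min$; conversely, if $U\in\mc{M}(W)$, $|U|=t$, and $S_0\subseteq W\cup U$ is a hyperedge, then every hyperedge $S''$ contained in $W\cup S_0\subseteq W\cup U$ must satisfy $S''\setminus W=U$ (a smaller residue would contradict minimality of $U$ in $\mc{M}(W)$), so $T(S_0,W)=U$. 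Getting this identity right is the first real step.

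Write $n=|X|$ and $m=Lq|X|$, which we may take to be an integer with $m+t\le n$, the remaining cases being trivial. Now I would use an averaging trick: let $V$ be a uniformly random $t$-subset of $X\setminus W$, so that
\[\mb{E}_W\,\bigl|\{U\in\mc{M}(W):|U|=t\}\bigr|=\binom{n-m}{t}\cdot\mb{P}_{W,V}\bigl[V\in\mc{M}(W)\bigr].\]
The key observation is that the pair $(W,V)$ has exactly the same distribution as $(W^+\setminus V,\,V)$, where $W^+$ is a uniformly random $(m+t)$-subset of $X$ and, given $W^+$, the set $V$ is a uniformly random $t$-subset of $W^+$. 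In these terms, $V\in\mc{M}(W^+\setminus V)$ says: $W^+$ contains a hyperedge, and $W^+\setminus B$ contains no hyperedge for every nonempty $B\subseteq V$. The second condition is equivalent to ``$W^+\setminus\{v\}$ contains no hyperedge for each $v\in V$'', which, given that $W^+$ does contain a hyperedge, is exactly the statement that $V$ lies inside the \emph{core} $\bigcap\{S\in H:S\subseteq W^+\}$ of $W^+$.

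To finish, I would bound this probability crudely. Since $W^+$ contains some hyperedge $S$ with $|S|\le\ell$, the core of $W^+$ is contained in $S$ and so has at most $\ell$ elements; hence, conditioned on $W^+$, the chance that a uniform $t$-subset of $W^+$ lands in the core is at most $\binom{\ell}{t}/\binom{m+t}{t}$. Discarding the factor $\mb{P}[W^+\in\langle H\rangle]\le 1$, this yields
\[\mb{E}_W\sum_{U\in\mc{U}_t(H,W)}q^t\ \le\ q^t\binom{n-m}{t}\cdot\frac{\binom{\ell}{t}}{\binom{m+t}{t}},\]
and the elementary bounds $\binom{n-m}{t}\le n^t/t!$ and $\binom{m+t}{t}>m^t/t!$ turn the prefactor into $q^t\binom{n-m}{t}/\binom{m+t}{t}<(qn/m)^t=L^{-t}$, which is the claim.

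I expect the only genuinely delicate point to be the first: checking that the ``$\arg\min$ over hyperedges'' in the definition of $T(S,W)$ produces exactly the size-$t$ minimal completions of $W$, and — relatedly — that this minimality is strong enough to force the auxiliary set $V$ into the bounded core $\bigcap\{S\in H:S\subseteq W^+\}$ rather than merely into some single hyperedge (the latter would be useless, since $H$ need not be $q$-small and can have astronomically many hyperedges). Once that structural fact is in hand, the averaging identity and the final binomial estimate are routine.
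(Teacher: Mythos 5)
Your proof is correct: the identification $\mc{U}_t(H,W)=\{U\in\mc{M}(W):|U|=t\}$ does follow in both directions from the minimality in the definition of $T(S,W)$ (and, as you argue, independently of how ties are broken), the exchangeability of $(W,V)$ with $(W^+\setminus V,V)$ is a standard fact about uniform disjoint pairs, and the core bound together with $\binom{n-m}{t}\le n^t/t!$ and $\binom{m+t}{t}>m^t/t!$ gives the strict inequality. It is worth noting, though, that your argument is at bottom the same double count as the paper's, recast probabilistically: the paper encodes each pair $(W,T)$ by the set $Z=W\sqcup T$ (your $W^+$) together with a $t$-subset of a canonical hyperedge $\chi(Z)\subseteq Z$, and its counting ratio $\binom{n}{m+t}/\binom{n}{m}$ is literally equal to your $\binom{n-m}{t}/\binom{m+t}{t}$, since $\binom{n}{m}\binom{n-m}{t}=\binom{n}{m+t}\binom{m+t}{t}$. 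What you do differently is to eliminate the tiebreaker function $\chi$ and the argmin bookkeeping in favor of an intrinsic description of $\mc{U}_t(H,W)$ as the size-$t$ minimal completions of $W$, and to place $V$ inside the core $\bigcap\{S\in H:S\subseteq W^+\}$ rather than inside one chosen hyperedge. This buys a cleaner, tie-breaking-free statement (indeed a slightly stronger structural fact, since the core sits inside $\chi(W^+)$; the paper's own argument proves it implicitly, as its reasoning shows $T\subseteq S''$ for every hyperedge $S''\subseteq Z$), at the cost of the two-directional equivalence check you rightly flag as the delicate point — the paper gets away with the single inclusion $T\subseteq\chi(Z)$, which is why its encoding argument is shorter.
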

\begin{proof}[Proof of Proposition \ref{few}]
We will follow the proof of Park and Pham \cite{ParkPham}. It is equivalent for us to show that $\sum_{W\in\binom{X}{Lq|X|}}|\mc{U}_t(H,W)|<\binom{|X|}{Lq|X|}L^{-t}q^{-t}\binom{\ell}{t}$. To achieve an upper bound on the number of $T=T(S,W)$, it suffices to give a procedure for uniquely specifying any valid $(W,T)$ pair (where the $T$ is $T(S,W)$ for that $W$ and some $S\in H$).

First, fix a universal ``tiebreaker'' function $\chi:\langle H\rangle\rightarrow H$ such that $\chi(Y)\subseteq Y$ for all $Y\in\langle H\rangle$.

Now, specify $Z=W\sqcup T$. Note that these two sets are disjoint by definition, so this has size exactly $Lq|X|+t$ and we have at most \[\binom{|X|}{Lq|X|+t}=\binom{|X|}{Lq|X|}\prod_{i=1}^t\frac{|X|-Lq|X|-i+1}{Lq|X|+i}\le\binom{|X|}{Lq|X|}(Lq)^{-t}\] valid choices.

Now, we claim that $T\subseteq\chi(Z)$. We know $Z\in\langle H\rangle$ since $S\subseteq Z$, so $\chi(Z)\subseteq Z=W\sqcup T$. If $T\not\subseteq\chi(Z)$, we could not have that $T$ was the minimizer, as we could have instead taken $S'=\chi(Z)$ and then $\chi(Z)\backslash W\subseteq T$.

We can thus specify $T$ (with $|T|=t$) as a subset of $\chi(Z)$ (with $|\chi(Z)|\le\ell$ since $\chi(Z)\in H$), so there are at most $\binom\ell t$ choices for $T$.

This process specified $T$ and the disjoint union of $T$ and $W$, so we have also specified $W$, and thus have given a way to specify every possible $(W,T(S,W))$ pair, with at most $\binom{|X|}{Lq|X|}(Lq)^{-t}\binom{\ell}{t}$ possible choices.
\end{proof}
\begin{proof}[Proof of Theorem \ref{kkpp} from Proposition \ref{few}]
We will iterate the process of replacing each $S$ by $T(S,W)$. Start with $H_0=H$, $X_0=X$, and $\ell_0=\ell(H)$. We will choose $W_i$ to be a uniformly random set in $\binom{X_i}{8q|X_i|}$. Set \[\mc{C}_i=\cup_{t=\lfloor\ell_i/2\rfloor+1}^{\ell_i}\mc{U}_t(H_i,W_i)\text{~~~and~~~}H_{i+1}=\{T(S,W_i):S\in H_i,|T(S,W_i)|\le\ell_i/2\}.\]Set $\ell_{i+1}=\lfloor\ell_i/2\rfloor$. Note that $H_{i+1}$ is a $\ell_{i+1}$-bounded hypergraph on $X_{i+1}=X\setminus\cup_{j=0}^iW_j$.

Now, we repeat until we reach an $i=I$ where $\ell_{I+1}<1$, which then gives $H_{I+1}=\emptyset$ or $H_{I+1}=\{\emptyset\}$. As $\ell_i\le2^{-i}\ell$, we have $I\le\lfloor\log(\ell)\rfloor$. Let $\mc{U}=\cup_{i=0}^I\mc{C}_i$ and $W=\cup_{i=0}^IW_i$. Now, we claim that either there is some $S\in H$ such that $S\subseteq W$ (in which case $H_{I+1}=\{\emptyset\}$ and we have succeeded), or else $H_{I+1}=\emptyset$ and $\mc{U}$ undercovers $H$. This is true because if you trace any $S_0=S$ through $S_{i+1}=T(S_i,W_i)$, there is some $0\le i\le I$ at which we have either $T(S_i,W_i)\in\mc{C}_i$, which gives that this $\mc{C}_i$ undercovers $S$; or $T(S_i,W_i)=\emptyset$, which means that $S_i\subseteq W_i$, and thus there exists an $S'\in H$ that is in $\cup_{j=0}^{i-1}W_j$, as $S_i=S'\setminus\cup_{j=0}^{i-1}W_j$ for some $S'\in H$.

Therefore, to show that there is a high probability of some $S\in H$ being in $W$, it suffices to show there is a low probability of $\mc{U}$ undercovering $H$. For each $1\le t\le\ell$, let $i(t)$ be the highest $i$ such that $\ell_i\ge t$. Note that sets of size $t$ are only added to $\mc{U}$ at one step of our process, into $\mc{C}_{i(t)}$. In other words, $\mc{U}_t=\mc{U}_t(H_{i(t)},W_{i(t)})$ and $\mc{U}=\cup_{t=1}^{\ell(H)}\mc{U}_t$. Then \begin{align*}
\mb{E}\sum_{U\in\mc{U}}q^{|U|}&=\mb{E}\sum_{t=1}^\ell\sum_{U\in\mc{U}_t}q^t=\sum_{t=1}^\ell\mb{E}\sum_{U\in\mc{U}_t}q^t<\sum_{t=1}^\ell 8^{-t}\binom{\ell_{i(t)}}{t}\say{(Proposition \ref{few} with $L=8$)}\\&\le\sum_{t=1}^\ell 8^{-t}\binom{2t-1}{t}<\sum_{t=1}^48^{-t}\binom{2t-1}{t}+\sum_{t=5}^\infty 8^{-t}2^{2t-1}=\frac{819}{4096}+\frac{1}{32}<\frac 14.
\end{align*}
If $\mc{U}$ undercovers $H$, then $\sum_{U\in\mc{U}}q^{|U|}>\frac 12$. By Markov's Inequality, $\mb{P}(\sum_{U\in\mc{U}}q^{|U|}>\frac 12)<\frac 12$. Using for the first time that $H$ is $q$-small, this means that with probability more than half, $\mc{U}$ does not undercover $H$ and thus some $S\in H$ has $S\subseteq W$.

$W$ is then a uniformly random set of size $\sum_{i=0}^I8q|X_i|\le\sum_{i=0}^{\lfloor\log(\ell)\rfloor}8q|X|=p|X|$. If we make $W=X_p$ rather than a random set of size $p|X|$, Theorem \ref{kkpp} still holds: we can freely add elements to $X$ that are not in any hyperedge in $H$ and take the limit as the number of these points goes to infinity.
\end{proof}
\section{Proof of Theorem \ref{MainTheorem}}
In this section, we will prove the main theorem of this paper, which we recall below:
\begin{theorem}[\ref{MainTheorem}]
Let $H$ be a $\ell$-bounded hypergraph that is not $q$-small and let $\epsilon\in(0,1)$. Let $p=48q\log\left(\frac{\ell}{\epsilon}\right)$. Then $\mb{P}(\exists~S\in H$ s.t.~$S\subseteq X_p)>1-\epsilon$.
\end{theorem}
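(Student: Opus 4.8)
The plan is to split the fragmentation of Section~2 into a cheap ``bulk'' phase that brings the edge-size bound down to $\ell^{\ast}:=\Theta(\log(1/\epsilon))$, followed by an ``endgame'' that disposes of the surviving $\ell^{\ast}$-bounded hypergraph. First I would run the iterated process exactly as in the proof of Theorem~\ref{kkpp} --- choosing $W_i$ uniformly in $\binom{X_i}{Lq|X_i|}$ for a suitable constant $L$, reducing the bound to $\max(\lfloor \ell_i/2\rfloor,\ell^{\ast})$ each round, and collecting into $\mc U$ only the remnants of size $>\max(\ell_i/2,\ell^{\ast})$ --- stopping once the bound reaches $\ell^{\ast}$; call $H'$ the $\ell^{\ast}$-bounded hypergraph that survives. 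The gain is that $\mc U$ now contains only sets of size exceeding $\ell^{\ast}$, so Proposition~\ref{few} bounds $\mb E\sum_{U\in\mc U}q^{|U|}$ by $\sum_{t>\ell^{\ast}}L^{-t}\binom{2t-1}{t}$, a geometric tail that for $L\ge 8$ is at most $2^{-\Omega(\ell^{\ast})}\le\epsilon/4$ as soon as $\ell^{\ast}\gtrsim\log(1/\epsilon)$. By Markov's inequality, with probability at least $1-\epsilon/2$ one has $\sum_{U\in\mc U}q^{|U|}\le\frac14$; and exactly as in Section~2, either some $S\in H$ already lies in $W_0\cup\cdots\cup W_{I-1}$, or $\mc U\cup H'$ undercovers $H$. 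In the second case, since $H$ is not $q$-small and $\mc U$ has $q$-weight at most $\frac14$, the residual $H'$ is ``not $(q,\tfrac14)$-small'': any $G$ undercovering $H'$ would, together with $\mc U$, undercover $H$, forcing $\frac14+\sum_{R\in G}q^{|R|}>\frac12$.

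It then remains to hit $H'$ --- an $\ell^{\ast}$-bounded hypergraph, not $(q,\tfrac14)$-small, on the $\approx|X|$ untouched vertices --- using the remaining budget of $\Theta(q\log(1/\epsilon))=\Theta(q\ell^{\ast})$ points with failure probability at most $\epsilon/2$. This is the regime where $\ell$ is ``matched'' to $\log(1/\epsilon)$, so that a random set of density $\Theta(q\ell^{\ast})$ is already dense enough to swallow edges of size up to $\ell^{\ast}$. I would do this by a direct probabilistic estimate rather than another black-box reduction: not being $(q,\tfrac14)$-small forces $\sum_{R\in H'}q^{|R|}>\frac14$, and --- crucially, when $q$ is small --- it rules out the ``fat low-weight core'' configurations (a hypergraph all of whose edges contain a common small set is easily seen to be $q$-small), which is the spread-type regularity one needs for a second-moment or Janson-type bound: the expected number of edges of $H'$ inside $X'_{p}$ with $p=\Theta(q\ell^{\ast})$ is $\Omega(\log(1/\epsilon))$ while the pairwise-intersection correction stays comparable, yielding $\mb P(X'_{p}\text{ is }H'\text{-free})\le e^{-\Omega(\log(1/\epsilon))}\le\epsilon/2$. (An alternative is to reduce $H'$ by one more short fragmentation to a $1$-bounded hypergraph $\{\{x\}:x\in Y\}$ --- for which not being $(q,s)$-small literally reads $|Y|>s/q$ --- and finish with a single random set $W$: since $|Y|=\Omega(1/q)$, $\mb P(W\cap Y=\varnothing)\le e^{-p|Y|}\le\epsilon/2$ for $p=\Theta(q\log(1/\epsilon))$; the delicate point there is that this intermediate fragmentation collects small sets of all sizes down to $2$, which one must shave off scale by scale rather than in one block to keep the constant.)

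The endgame is the main obstacle. Boosting the $\frac12$-probability statement of Theorem~\ref{kkpp} applied to $H'$ by taking $\log(1/\epsilon)$ independent samples does work, but costs an extra $\log\log(1/\epsilon)$ factor, since each sample is itself a fragmentation with $\Theta(\log\ell^{\ast})=\Theta(\log\log(1/\epsilon))$ rounds. Getting the clean $O(q\log(\ell/\epsilon))$ bound requires either the direct estimate above, or sharing the fragmentation of $H'$ across all the samples (a ``trunk-and-branches'' coupling) together with a check --- in effect the Markov step of the first paragraph, run one level down --- that the not-$q$-small property survives the shared trunk with enough slack for each branch to succeed with constant probability. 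Making one of these genuinely avoid the $\log\log$ loss, and then tracking the constants through the bulk phase and the endgame to land at $48$, is where the real work lies.
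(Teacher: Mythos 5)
Your ``bulk'' phase is sound and is essentially a truncated version of the Section 2 argument: stopping the halving at $\ell^{\ast}=\Theta(\log(1/\epsilon))$ does make the expected weight of the collected fragments a geometric tail of size $2^{-\Omega(\ell^{\ast})}$, so Markov gives failure probability $O(\epsilon)$ for that phase, and your deduction that the surviving $\ell^{\ast}$-bounded hypergraph $H'$ admits no cover of $q$-weight $\le\frac14$ is correct. The genuine gap is exactly where you place it: the endgame is not proved, and neither of your sketches closes it. The direct second-moment/Janson estimate needs a spread-type condition on $H'$ (or at least a spread probability measure supported on $H'$), and ``not $(q,\frac14)$-small'' does not supply one: non-smallness is a statement about integral covers, whereas spreadness is what the known second-moment proofs (Mossel--Niles-Weed--Sun--Zadik, Rao) require, and the paper's Lemma only gives the implication in the opposite direction ($\kappa$-spread $\Rightarrow$ not $\frac1\kappa$-small). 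Making a second-moment argument run on mere non-smallness would be a new result, not a routine estimate, so ``the pairwise-intersection correction stays comparable'' cannot be asserted. The alternative reduction of $H'$ to a $1$-bounded hypergraph has a different, fatal problem: that intermediate fragmentation is itself controlled only by Markov, so it succeeds with constant probability, not $1-O(\epsilon)$; the resulting overall failure probability is a constant, and repairing this by repetition is precisely the unresolved boosting issue (your $\log\log(1/\epsilon)$ loss) in another guise.

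The paper's proof resolves this with one idea you did not find, and it removes the need for any two-phase split or endgame estimate. Run the same fragmentation, but declare round $i$ a \emph{success} if the weight of the newly collected fragments $\mc{C}_i$ is at most twice its conditional expectation $\sum_{t>\lfloor\ell_i/2\rfloor}8^{-t}\binom{\ell_i}{t}$; by Markov each round succeeds with probability $>\frac12$ regardless of history. On failure you do not halve $\ell_i$ but simply retry on $X_i\setminus W_i$ (non-smallness is preserved); on success you halve as usual. The point is that the weight bound $\sum_{U\in\mc{U}}q^{|U|}<\frac12$ over the \emph{successful} rounds then holds deterministically, so non-smallness guarantees that once $\lfloor\log\ell\rfloor+1$ successes have occurred, a hyperedge of $H$ lies inside $\bigcup_i W_i$ surely. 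The only randomness left is the number of successes among $I=6\lfloor\log(\ell/\epsilon)\rfloor$ rounds, each costing density $8q$, and a Chernoff bound for $\mathrm{Bin}(I,\frac12)$ gives failure probability $<\epsilon$. This is what makes the $\epsilon$-dependence additive, $p=O(q\log(\ell/\epsilon))$, with no $\log\log(1/\epsilon)$ loss and no need for any spread-type input.
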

As a warm-up to the proof of Theorem \ref{MainTheorem}, we first note that we can quickly get logarithmic $\epsilon$-dependence if we allow a product of $\log(\ell)$ and $\log(1/\epsilon)$ instead of a sum:
\begin{proposition}\label{ppcor}
Let $H$ be a $\ell$-bounded hypergraph that is not $q$-small and let $\epsilon\in(0,1)$. Let $p=8q\log(2\ell)\lceil\log\left(\frac{1}{\epsilon}\right)\rceil$. Then $\mb{P}(\exists~S\in H$ s.t.~$S\subseteq X_p)>1-\epsilon$.
\end{proposition}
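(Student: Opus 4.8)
The plan is to bootstrap the $\tfrac12$-probability guarantee of Theorem \ref{kkpp} by taking a union of independent copies. Write $p_0=8q\log(2\ell)$ and $k=\lceil\log(1/\epsilon)\rceil$, so that $p=kp_0$. We may assume $p_0<1$, since otherwise $X_{p_0}=X$ deterministically and, as $H$ is not $q$-small (hence nonempty), we are already done; similarly we may assume no hyperedge of $H$ is empty (otherwise $\emptyset\subseteq X_p$ always), so in particular $\ell(H)\geq1$.

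First I would record that $\mb{P}(\exists~S\in H\text{ s.t.~}S\subseteq X_{p_0})>\tfrac12$. Indeed, since $H$ is not $q$-small we have $q>q(H)$, and since $H$ is $\ell$-bounded we have $\ell\geq\ell(H)$; together with $\ell(H)\geq1$ this gives $p_0=8q\log(2\ell)>8q(H)\log(2\ell(H))\geq p_c(H)$, so the claim follows from the definition of $p_c(H)$ and monotonicity of the event $\{\exists~S\in H:S\subseteq\,\cdot\,\}$.

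Next, let $Y_1,\dots,Y_k$ be independent copies of $X_{p_0}$ and set $Y=Y_1\cup\dots\cup Y_k$. Each element of $X$ lies in $Y$ independently with probability $1-(1-p_0)^k\leq kp_0=p$, by Bernoulli's inequality, so the product measures can be coupled with $Y\subseteq X_p$; since $\{\exists~S\in H:S\subseteq\,\cdot\,\}$ is an increasing event,
\[
\mb{P}(\exists~S\in H\text{ s.t.~}S\subseteq X_p)\ \geq\ \mb{P}(\exists~S\in H\text{ s.t.~}S\subseteq Y).
\]
Finally, if some $S\subseteq Y_i$ then $S\subseteq Y$, so by independence of the $Y_i$ and the previous paragraph,
\[
\mb{P}(\exists~S\in H\text{ s.t.~}S\subseteq Y)\ \geq\ 1-\bigl(1-\mb{P}(\exists~S\in H\text{ s.t.~}S\subseteq X_{p_0})\bigr)^{k}\ >\ 1-2^{-k}\ \geq\ 1-2^{-\log(1/\epsilon)}\ =\ 1-\epsilon,
\]
which gives the bound.

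There is essentially no hard step here: the only points needing care are the reductions to $p_0<1$ and to $H$ having no empty edge, and the elementary coupling $Y_1\cup\dots\cup Y_k\subseteq X_p$ justified via Bernoulli's inequality. The reason this only yields the product $\log(2\ell)\lceil\log(1/\epsilon)\rceil$ rather than the sum $\log(\ell/\epsilon)$ is that we pay a full factor $8q\log(2\ell)$ for each of the $k$ independent attempts; the genuinely delicate version of this argument, in which $\log\ell$ and $\log(1/\epsilon)$ appear additively, is carried out in the proof of Theorem \ref{MainTheorem}.
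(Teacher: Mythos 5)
Your proof is correct, and it reaches the same amplification idea as the paper --- repeat a trial that succeeds with probability greater than $\tfrac12$ about $\lceil\log(1/\epsilon)\rceil$ times, then couple the union with $X_p$ via $1-(1-p_0)^k\le kp_0$ --- but you implement it differently. The paper runs the rounds \emph{sequentially on the shrinking ground set}: it takes $W_i=(X_i)_{8q\log(2\ell)}$ with $X_{i+1}=X_i\setminus W_i$, and applies Theorem \ref{kkpp} to the residual hypergraph $H_{i+1}=\{S\setminus W_i:S\in H_i\}$, using the observation that this hypergraph undercovers $H$ and hence is still not $q$-small whenever the process has not yet succeeded; this conditional, adaptive phrasing also foreshadows the machinery needed for Theorem \ref{MainTheorem}. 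You instead take $k$ \emph{independent} copies of $X_{p_0}$ on the full ground set, which lets you multiply unconditional probabilities directly and avoid the residual-hypergraph observation altogether --- arguably more elementary. The price is that you must extract the strict bound $\mb{P}(\exists\,S\subseteq X_{p_0})>\tfrac12$ from the statement of Theorem \ref{kkpp} via $q>q(H)$, $\ell\ge\ell(H)$ and the definition of $p_c(H)$, which quietly uses that $q(H)$ is attained and that $p\mapsto\mb{P}(\exists\,S\in H:S\subseteq X_p)$ is strictly increasing for nontrivial $H$; both are standard (and you could sidestep them by citing the equivalent restatement of Theorem \ref{kkpp} given in the introduction, or by noting that the proof of Theorem \ref{kkpp} directly gives the $>\tfrac12$ statement for any $\ell$-bounded hypergraph that is not $q$-small). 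Your explicit handling of the edge cases $p_0\ge1$ and $\emptyset\in H$ is careful and goes slightly beyond what the paper records.
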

\begin{proof}
Note that if some set $W$ does not contain a hyperedge in $H$, then $H'=\{S\backslash W:S\in H\}$ undercovers $H$, and is thus is also not $q$-small. So let $H_0=H$ and $X_0=X$. For all $1\le i\le\lceil\log\left(\frac{1}{\epsilon}\right)\rceil$, we take $W_i=(X_i)_{8q\log(2\ell)}$, take $X_{i+1}=X_i\backslash W_i$, and take $H_{i+1}=\{S\backslash W_i:S\in H_i\}$. At each step, either we have some $S\in H_i$ such that $S\subseteq X_i$ or $H_{i+1}$ is $\ell$-bounded and not $q$-small. Thus, at each step, if we have not yet succeeded, we have probability $>\frac 12$ of $W_i$ containing a hyperedge in $H_i$ by Theorem \ref{kkpp}. So after $\lceil\log\left(\frac{1}{\epsilon}\right)\rceil$ steps, we have that $W=\cup_{i=1}^{\lceil\log(1/\epsilon)\rceil}W_i$ has more than a $1-\epsilon$ chance of containing some hyperedge in $H$. We have $W\sim X_{1-(1-8q\log(2\ell))^{\lceil\log(1/\epsilon)\rceil}}$ and $1-(1-8q\log(2\ell))^{\lceil\log(1/\epsilon)\rceil}<p$, so this is also true for $W\sim X_p$.
\end{proof}
However, we will see in the next section that Proposition \ref{ppcor} is not the bound we want. What the above proof does give us is the important idea that, in this setting, we can repeat a random trial where success is more likely than failure until it succeeds.
\begin{proof}[Proof of Theorem \ref{MainTheorem}]
As in the proof of Theorem \ref{kkpp}, we will iterate the process of replacing each $S$ by $T(S,W)$, starting with $H_0=H$, $X_0=X$, and $\ell_0=\ell(H)$. We choose $W_i$ uniformly at random from $\binom{X_i}{8q|X_i|}$ and let $\mc{C}_i=\cup_{t=\lfloor\ell_i/2\rfloor+1}^{\ell_i}\mc{U}_t(H_i,W_i)$. The main difference now is that we will have a ``success'' and a ``failure'' criteria at each stage. By Proposition \ref{few}, we know that \[\mb{E}\sum_{U\in\mc{C}_i}q^{|U|}=\mb{E}\sum_{t=\lfloor\ell_i/2\rfloor+1}^{\ell_i}\sum_{U\in\mc{U}_t}q^t<\sum_{t=\lfloor\ell_i/2\rfloor+1}^{\ell_i}8^{-t}\binom{\ell_i}{t}.\]
We consider step $i$ a ``failure'' if \[\sum_{U\in\mc{C}_i}q^{|U|}>2\left(\sum_{t=\lfloor\ell_i/2\rfloor+1}^{\ell_i}8^{-t}\binom{\ell_i}{t}\right).\]By Markov's inequality, success is more likely than failure at every step. We always set $X_{i+1}=X_i\setminus W_i$. If step $i$ fails, then we keep $H_{i+1}=\{S\backslash W_i:S\in H_i\}$ (or for that matter, $H_{i+1}=\{T(S,W_i):S\in H_i\}$) and $\ell_{i+1}=\ell_i$, essentially keeping the same hypergraph and retrying. If step $i$ succeeds, then as before we set $H_{i+1}=\{T(S,W_i):S\in H_i,|T(S,W_i)|\le\ell_i/2\}$ and $\ell_{i+1}=\lfloor\ell_i/2\rfloor$. In either case, $H_{i+1}$ is a $\ell_{i+1}$-bounded hypergraph on $X\setminus\cup_{j=0}^iW_j$, so our claim of success on step $i+1$ being more likely than failure still holds. If $H_i$ only contains the empty set, we set $\ell_{i+1}=0$ and simply do nothing for all remaining $i$ (these steps can be considered successes).

We repeat this for $I=6\lfloor\log\left(\frac\ell\epsilon\right)\rfloor$ steps. Let \[\mc{U}=\bigcup_{1\le i\le I:\text{step $i$ succeeded}}\mc{C}_i\]Again, letting $i(t)$ to be the highest $i$ such that $\ell_i\ge t$, we have that $\mc{U}_t=\mc{U}_t(H_{i(t)},W_{i(t)})$ and $\mc{U}=\cup_{t=\lceil\ell_{I+1}\rceil}^{\ell(H)}\mc{U}_t$. Now, by our success criteria and our proof of Theorem \ref{kkpp}, we know for sure that \[\sum_{U\in\mc{U}}q^{|U|}\le2\sum_{t=1}^\ell 8^{-t}\binom{\ell_{i(t)}}{t}<\frac 12,\]so as $H$ is $q$-small, $\mc{U}$ for sure does not undercover $H$.

If $\ell_{I+1}<1$, then this means that $\cup_{i=1}^IW_i$ contains a hyperedge in $H$. If we have had at least $\lfloor\log(\ell)\rfloor+1$ successes, then we do have $\ell_{I+1}<1$. We have had $6\lfloor\log\left(\frac\ell\epsilon\right)\rfloor$ steps, each of which had a greater than $\frac 12$ probability of succeeding (regardless of the success or failure of previous steps). Let $X$ be our number of successes, which we then know is stochastically dominated by $Y\sim Bin(I,\frac 12)$. Standard Chernoff bounds give that\[\mb{P}\left(Y\le (1-\delta)\mb{E}Y\right)\le\left(\frac{e^{-\delta}}{(1-\delta)^{1-\delta}}\right)^{\mb{E}Y}.\]
Note that $\frac{\lfloor\log(\ell)\rfloor}{\mb{E}Y}=\frac{\lfloor\log(\ell)\rfloor}{3\lfloor\log(\ell/\epsilon)\rfloor}\le\frac 13$ as $\epsilon\le 1$. So here,\begin{align*}
\mb{P}(\nexists~S\in H\text{ s.t.~}S\subseteq\cup_{i=1}^IW_i)&\le\mb{P}(\ell_{I+1}\ge 1)=\mb{P}(X\le\lfloor\log(\ell)\rfloor)\\&\le\mb{P}(Y\le\lfloor\log(\ell)\rfloor)\le\mb{P}(Y\le(1-2/3)\mb{E}Y)\\&\le\left(\frac{e^{-2/3}}{(1/3)^{(1/3)}}\right)^{\lfloor\log(\ell/\epsilon)\rfloor}<\left(\frac 12\right)^{\log(\ell/\epsilon)-1}=\frac{2\epsilon}{\ell}\le\epsilon
\end{align*}for $\ell\ge 2$. Then $|\cup_{i=1}^IW_i|\le 8Iq|X|\le 48q\log\left(\frac\ell\epsilon\right)|X|$, and once again we can take $W=X_{48q\log(\ell/\epsilon)}$ instead by adding points to $X$ not in any hyperedge in $H$.
\end{proof}
\section{Why Theorem \ref{MainTheorem} is the ``Optimal Convergence Rate''}
Now that we have proven our main theorem, we will explain how it relates to prior work. Park--Pham's paper implied Theorem \ref{MainTheorem} but with a bound of $p=O(q(\log(\ell)+\epsilon^{-c}))$ for some $c\approx 2$ \cite{ParkPham}, an exponentially worse $\epsilon$-dependence than our bound of $p=48q\log\left(\frac{\ell}{\epsilon}\right)$. Our Theorem \ref{MainTheorem} can be rephrased as follows:
\begin{corollary}
Let $H$ be a $\ell$-bounded hypergraph that is not $q$-small and choose any $c\ge 1$. Let $p=48cq\log(2\ell)$. Then $\mb{P}(\exists~S\in H$ s.t.~$S\subseteq X_p)>1-\ell^{-c}$.
\end{corollary}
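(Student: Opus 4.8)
The plan is to obtain this corollary directly from Theorem~\ref{MainTheorem} --- or, more precisely, from its proof --- by substituting a suitable value of $\epsilon$. The natural choice is the one that makes the two thresholds agree on the nose: solving $48q\log(\ell/\epsilon)=48cq\log(2\ell)$ gives $\epsilon=\ell/(2\ell)^c=2^{-c}\ell^{1-c}$. With this $\epsilon$ there are only three things to verify: that $\epsilon\in(0,1)$, that $48q\log(\ell/\epsilon)$ really is the stated $p=48cq\log(2\ell)$, and that the conclusion ``$>1-\epsilon$'' can be sharpened to ``$>1-\ell^{-c}$''.

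Concretely, I would first discard the degenerate range $\ell\le 1$ (there $1-\ell^{-c}\le 0$, while the probability in question is positive since a hypergraph that is not $q$-small is nonempty), and assume $\ell\ge 2$. Next, since $c\ge 1$ we have $2^{-c}\le\tfrac12$ and $\ell^{1-c}\le 1$, so $0<\epsilon\le\tfrac12<1$, which legitimizes the application of Theorem~\ref{MainTheorem}. Because all logarithms are base~$2$, $\log(\ell/\epsilon)=\log\big((2\ell)^c\big)=c\log(2\ell)$, so Theorem~\ref{MainTheorem} with this $\epsilon$ is exactly the assertion at $p=48cq\log(2\ell)$. Finally, rather than its stated conclusion I would invoke the stronger bound established inside the proof of Theorem~\ref{MainTheorem}, namely that the failure probability is \emph{strictly} less than $2\epsilon/\ell$ (valid for $\ell\ge 2$, coming from the Chernoff step). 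For our $\epsilon$ this is $2\epsilon/\ell=2^{1-c}\ell^{-c}\le\ell^{-c}$, using $c\ge 1$ one last time; hence $\mb{P}(\exists~S\in H \text{ s.t. } S\subseteq X_p)>1-\ell^{-c}$.

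The only real subtlety is this last point. The nominal conclusion ``$>1-\epsilon$'' of Theorem~\ref{MainTheorem} is too lossy to recover the constant $48$ here, because $\epsilon=2^{-c}\ell^{1-c}$ can be much larger than $\ell^{-c}$ when $\ell\gg 2^{c}$; one genuinely needs the $2\epsilon/\ell$ improvement (equivalently, one can simply re-run the proof of Theorem~\ref{MainTheorem} verbatim with $\epsilon=2^{-c}\ell^{1-c}$). Everything else --- the case $\ell\le1$, the membership $\epsilon\in(0,1)$, and the logarithm identity --- is immediate, and I would close by remarking that this is precisely why Theorem~\ref{MainTheorem} and the corollary are the same statement under the change of variables $\epsilon\leftrightarrow\ell^{-c}$.
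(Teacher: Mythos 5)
Your proposal is correct and is essentially the paper's own argument: the paper likewise plugs a specific $\epsilon$ of the form (constant)$\cdot\ell^{1-c}$ into the proof of Theorem~\ref{MainTheorem} and uses the internal failure bound $2\epsilon/\ell$ rather than the lossier stated conclusion. The only cosmetic difference is the choice of constant --- the paper takes $\epsilon=\tfrac12\ell^{1-c}$ (so $2\epsilon/\ell=\ell^{-c}$, with the threshold $48q\log(2\ell^{c})\le 48cq\log(2\ell)$ handled by monotonicity), while you take $\epsilon=2^{-c}\ell^{1-c}$ so the thresholds match exactly; both work for the same reason.
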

\begin{proof}
At the end of the proof of Theorem \ref{MainTheorem} our failure probability was $\frac{2\epsilon}{\ell}$. Therefore, we can simply set $\epsilon=\frac 12\ell^{1-c}$, which is at most 1 as required.
\end{proof}
The above corollary is important because it matches the ``with high probability'' statements of prior work, that is, the probability of $X_p$ containing a hyperedge in $H$ goes to 1 as $\ell$ goes to infinity. $\ell^{-c}$ can be thought of as the convergence rate of this probability to 1. The Park--Pham bounds earlier gave for $p=O(q\log(\ell))$ a convergence rate of $(\log(\ell))^{-c}$ for some $c\approx\frac 12$ \cite{ParkPham}, so as before this is an exponential improvement.

One motivation for achieving the bound in Theorem \ref{MainTheorem} was to generalize the similar bound that was achieved under the ``fractional expectation-threshold'' or ``$\kappa$-spread'' framework by Rao \cite{Rao}, improving the $\epsilon$-dependence of previous work \cite{ALWZ,FKNP}.
\begin{definition}[\cite{Talagrand}, Definition 6.6]
$H$ is \underline{$\kappa$-spread} if for all $Y\subseteq X$,\[|\{S\in H:S\subseteq Y\}|\le\kappa^{-|Y|}|H|.\]
\end{definition}
\begin{theorem}[\cite{Rao}, Lemma 4]\label{Rao}
Let $H$ be a $\ell$-bounded hypergraph that is $\kappa$-spread and let $\epsilon\in(0,1)$. There exists a universal constant $\beta$ such that for $p=\frac{\beta}{\kappa}\log\left(\frac{\ell}{\epsilon}\right)$, we have that $\mb{P}(\exists~S\in H$ s.t.~$S\subseteq X_p)>1-\epsilon$.
\end{theorem}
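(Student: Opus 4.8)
The plan is to read off Theorem~\ref{Rao} from Theorem~\ref{MainTheorem}, using the observation that $\kappa$-spreadness is just a convenient sufficient condition for failing to be $\frac1\kappa$-small. Concretely, I claim that any nonempty $\ell$-bounded $\kappa$-spread hypergraph $H$ with $\kappa>1$ is not $\frac1\kappa$-small. Let $G$ be any hypergraph with $H\subseteq\langle G\rangle$. Each $S\in H$ contains some $R\in G$, so $H=\bigcup_{R\in G}\{S\in H:R\subseteq S\}$, and hence, invoking the $\kappa$-spread inequality $|\{S\in H:R\subseteq S\}|\le\kappa^{-|R|}|H|$ for each $R$,
\[|H|\ \le\ \sum_{R\in G}\bigl|\{S\in H:R\subseteq S\}\bigr|\ \le\ \sum_{R\in G}\kappa^{-|R|}|H| .\]
Dividing by $|H|$ gives $\sum_{R\in G}\left(\tfrac1\kappa\right)^{|R|}\ge 1>\tfrac12$, so no $G$ can witness $\frac1\kappa$-smallness, which proves the claim.

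Granting the claim, the theorem follows immediately. Degenerate cases are easy: if $H$ is empty the statement is vacuous; if $\emptyset\in H$ then $\emptyset\subseteq X_p$ always; and if $\kappa\le 1$ or $\ell\le 1$ the bound $p=\frac{\beta}{\kappa}\log\!\left(\frac\ell\epsilon\right)$ can be verified directly (for instance, if $\kappa\le 1$ and $\ell\ge 2$ then $p\ge\beta\ge 1$, so $X_p=X$ contains every hyperedge). Otherwise $\kappa>1$ and $\ell\ge 2$, so by the claim $H$ is an $\ell$-bounded hypergraph that is not $q$-small with $q=\frac1\kappa$, and Theorem~\ref{MainTheorem} applied to this $q$ gives $\mb{P}(\exists~S\in H$ s.t.~$S\subseteq X_p)>1-\epsilon$ for
\[p\ =\ 48q\log\!\left(\frac\ell\epsilon\right)\ =\ \frac{48}{\kappa}\log\!\left(\frac\ell\epsilon\right),\]
so that $\beta=48$ works.

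With Theorem~\ref{MainTheorem} in hand there is no real obstacle here; the only thing to get right is what \emph{not} to do. The tempting route is to relate spreadness to the (fractional) expectation threshold and thence to $q$-smallness in the style of \cite{FKNP}, but those comparisons lose a $\log\ell$ factor and would yield a bound of order $\frac1\kappa\log^2\ell$, far weaker than Rao's. The point is that we never need the \emph{largest} $q$ for which $H$ is $q$-small: the one-line count above already shows $H$ is not $\frac1\kappa$-small, and that is exactly what Theorem~\ref{MainTheorem} requires, so the optimal $\epsilon$-dependence transfers with no loss. One could probably shave the constant $48$ by re-running the proof of Theorem~\ref{MainTheorem} and exploiting the stronger conclusion $\sum_{R\in G}\kappa^{-|R|}\ge 1$ in place of the weaker bound $>\tfrac12$, but this is unnecessary for obtaining a universal constant $\beta$.
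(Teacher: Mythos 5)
Your proposal is correct and follows essentially the same route as the paper: you re-prove the lemma that a (nonempty) $\kappa$-spread hypergraph is not $\frac1\kappa$-small by the same counting/union-bound argument over $R\in G$ (the paper just groups the sum by $|R|=r$), and then read off Theorem \ref{Rao} with $\beta=48$ by applying Theorem \ref{MainTheorem} with $q=\frac1\kappa$, exactly as the paper intends. Your extra care with the degenerate cases ($\emptyset\in H$, $\kappa\le1$, $\ell\le1$) is fine and consistent with the paper's argument.
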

Our Theorem \ref{MainTheorem} is a generalization of Rao's Theorem \ref{Rao} due to the following basic lemma:
\begin{lemma}[\cite{Talagrand}, Propositions 6.2, 6.7]
If $H$ is $\kappa$-spread, $H$ is not $\frac{1}{\kappa}$-small.
\end{lemma}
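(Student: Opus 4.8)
The plan is to prove the contrapositive: assuming $H$ is $\frac{1}{\kappa}$-small, I will show $H$ cannot be $\kappa$-spread. So suppose there is a hypergraph $G$ with $H \subseteq \langle G\rangle$ and $\sum_{R \in G} \kappa^{-|R|} \le \frac{1}{2}$ — wait, we need $q = \frac{1}{\kappa}$, so the hypothesis is $\sum_{R\in G} (1/\kappa)^{|R|} \le \frac12$. The key observation is that because $G$ undercovers $H$, every $S \in H$ contains some $R \in G$; hence $H = \bigcup_{R \in G}\{S \in H : R \subseteq S\}$, and so $|H| \le \sum_{R \in G} |\{S \in H : R \subseteq S\}|$.

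The next step is to bound each term $|\{S \in H : R \subseteq S\}|$ using $\kappa$-spreadness. Setting $Y = R$ in the definition of $\kappa$-spread gives $|\{S \in H : S \subseteq R\}| \le \kappa^{-|R|}|H|$, but that's the wrong containment — I need $R \subseteq S$, not $S \subseteq R$. The right move is to work with the ``co-spread'' formulation: a standard and easy consequence of $\kappa$-spreadness (this is exactly Proposition 6.2 / 6.7 of Talagrand being cited) is that for every fixed set $Z \subseteq X$, $|\{S \in H : Z \subseteq S\}| \le \kappa^{-|Z|}|H|$ — one sees this by a direct counting argument, e.g. by considering the ``link'' of $Z$ or by an inclusion argument, noting that if many hyperedges contained $Z$ then restricting attention to a generic $Y$ of size $|Z|$ would violate the spread bound. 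I would either cite this directly or include the one-line argument.

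Granting that, the finish is immediate:
\[
|H| \le \sum_{R \in G} |\{S \in H : R \subseteq S\}| \le \sum_{R \in G} \kappa^{-|R|}|H| = |H|\sum_{R\in G}\left(\tfrac{1}{\kappa}\right)^{|R|} \le \tfrac{1}{2}|H|,
\]
which forces $|H| = 0$, i.e. $H$ is trivial, contradicting $\kappa$-spreadness of a nontrivial $H$ (and the trivial case is vacuous or can be excluded). Hence $H$ is not $\frac{1}{\kappa}$-small.

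The main obstacle is the mismatch between the two directions of containment in the definition of $\kappa$-spread versus the definition of undercovering: $q$-smallness is about hyperedges of $G$ being \emph{inside} hyperedges of $H$, while the spread definition as literally stated bounds hyperedges of $H$ \emph{inside} a test set $Y$. Resolving this cleanly — by invoking or quickly reproving the equivalent ``upward'' form of spreadness — is really the only content of the proof; everything else is a union bound and a geometric-type sum. I would expect the authors to simply cite Talagrand's Propositions 6.2 and 6.7 for this reformulation, as the lemma statement already does.
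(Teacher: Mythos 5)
Your proposal is correct and is essentially the paper's proof: both arguments are a union bound over the undercovering family $G$, using $|\{S\in H: R\subseteq S\}|\le\kappa^{-|R|}|H|$ for each $R\in G$ and summing; the paper merely groups the hyperedges of $G$ by their size $r$ (via $n_r$ and $c_r$) and states the conclusion directly as $\sum_{R\in G}\kappa^{-|R|}\ge 1>\frac12$ for every undercover $G$, rather than as a contrapositive. The containment ``mismatch'' that you spend a paragraph on is not a genuine obstacle: the inclusion in the paper's statement of Talagrand's Definition 6.6 is simply reversed (as literally written, taking $Y=X$ forces $H=\emptyset$ whenever $\kappa>1$), and the intended, standard definition of $\kappa$-spread is exactly the upward form $|\{S\in H: Y\subseteq S\}|\le\kappa^{-|Y|}|H|$, which is precisely the bound the paper's own proof invokes --- so no reformulation is needed, and it is good that you only hedged rather than relied on your sketched derivation of the upward form from the downward one (the ``generic $Y$ of size $|Z|$'' argument would not actually work). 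Your explicit handling of the degenerate case $|H|=0$ corresponds to the paper's implicit division by $|H|$, i.e., its tacit assumption that $H$ is nonempty.
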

\begin{proof}
Let $G$ such that $H\subseteq\langle G\rangle$. For all $r\in\mb{N}$, let $n_r=|\{R\in G:|R|=r\}|$ and let $c_r$ be the number of hyperedges $S\in H$ such that $\exists~R\in G$ s.t.~$R\subseteq S$ and $|R|=r$. Because $H$ is $\kappa$-spread, $c_r\le n_r|H|\kappa^{-r}$ for all $r\in\mb{N}$. However, as every hyperedge in $H$ contains some hyperedge in $G$, $\sum_{r\in\mb{N}}c_r\ge|H|\implies\sum_{r\in\mb{N}}\frac{c_r}{|H|}\ge 1$. Putting these inequalities together, $\sum_{R\in G}(\frac{1}{\kappa})^{|R|}=\sum_{r\in\mb{N}}n_r\kappa^{-r}\ge 1>\frac 12$. As this is true for any $G$ such that $H\subseteq\langle G\rangle$, we must have that $H$ is not $\frac{1}{\kappa}$-small.
\end{proof}
Rao's result, and thus ours, is asymptotically optimal (that is, optimal except for the constant) in $\ell$, $\epsilon$, and $\kappa$ (or $q$):
\begin{proposition}[\cite{BCW}, Lemma 4; adapted from \cite{ALWZ}]
Let $\epsilon\in(0,\frac 12]$ and $\kappa,\ell\in\mb{N}$ such that $p=\frac{1}{6\kappa}\log\left(\frac{\ell}{\epsilon}\right)$ has $p\le.7$. There exists a $\kappa$-spread hypergraph $H$ such that $\mb{P}(\exists~S\in H$ s.t.~$S\subseteq X_p)<1-\epsilon$.
\end{proposition}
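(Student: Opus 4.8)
The plan is to exhibit the standard ``block'' (transversal) construction, following the lower bound of \cite{ALWZ}. Let the ground set be $X=B_1\sqcup\cdots\sqcup B_\ell$ with $|B_i|=\kappa$ for every $i$, so $|X|=\kappa\ell$, and let $H$ be the family of all transversals, i.e.\ all $S\subseteq X$ with $|S\cap B_i|=1$ for every $i$; thus $H$ is $\ell$-uniform and $|H|=\kappa^\ell$. For spreadness, fix $Y\subseteq X$: if some block meets $Y$ in at least two elements, then no $S\in H$ has $Y\subseteq S$, so $|\{S\in H:Y\subseteq S\}|=0$; otherwise $Y$ meets $|Y|$ of the blocks in a single element each, and a transversal containing $Y$ is obtained by freely choosing one element from each of the other $\ell-|Y|$ blocks, so $|\{S\in H:Y\subseteq S\}|=\kappa^{\ell-|Y|}=\kappa^{-|Y|}|H|$. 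In both cases $|\{S\in H:Y\subseteq S\}|\le\kappa^{-|Y|}|H|$, so $H$ is $\kappa$-spread.

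The key point is that $X_p$ contains a transversal if and only if $X_p\cap B_i\neq\emptyset$ for every $i$, and these $\ell$ events are independent, each of probability $1-(1-p)^\kappa$; hence
\[
\mb{P}(\exists\,S\in H\text{ s.t.\ }S\subseteq X_p)=\bigl(1-(1-p)^\kappa\bigr)^\ell ,
\]
and it suffices to show this is less than $1-\epsilon$. Using $1-x<e^{-x}$ for $x>0$ we get $\bigl(1-(1-p)^\kappa\bigr)^\ell<e^{-\ell(1-p)^\kappa}$, so it is enough to prove $\ell\,(1-p)^\kappa\ge-\ln(1-\epsilon)$, i.e.\ that $(1-p)^\kappa$ is not too small. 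This is where the hypotheses enter: on $(0,0.7]$ the function $p\mapsto\ln(1-p)/p$ is decreasing (a one-line derivative check), hence at least $\ln(0.3)/0.7$ there, so $\ln(1-p)\ge(\ln(0.3)/0.7)\,p$; combining this with $\kappa p=\tfrac16\log(\ell/\epsilon)$ gives
\[
(1-p)^\kappa=e^{\kappa\ln(1-p)}\ \ge\ e^{(\ln(0.3)/0.7)\,\kappa p}\ =\ \Bigl(\tfrac{\ell}{\epsilon}\Bigr)^{-\alpha},\qquad \alpha:=\frac{-\ln(0.3)}{4.2\ln 2}\,<\,\tfrac12 ,
\]
and therefore $\ell\,(1-p)^\kappa\ge\ell^{1-\alpha}\epsilon^{\alpha}=(\ell/\epsilon)^{1-\alpha}\epsilon$.

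It remains to check $(\ell/\epsilon)^{1-\alpha}\epsilon\ge-\ln(1-\epsilon)$. For $\epsilon\le\tfrac12$ we have $-\ln(1-\epsilon)\le\epsilon/(1-\epsilon)\le 2\epsilon$, so it suffices that $(\ell/\epsilon)^{1-\alpha}\ge 2$; since $\alpha<\tfrac12$ this holds once $\ell/\epsilon\ge4$, which is automatic when $\ell\ge2$ (as $\epsilon\le\tfrac12$). The only remaining case is $\ell=1$, where the desired bound is simply $(1-p)^\kappa>\epsilon$, which follows from $(1-p)^\kappa\ge\epsilon^{\alpha}>\epsilon$. I expect the only genuine obstacle to be this closing numerical step: one must confirm that the constants $6$ and $0.7$ in the hypothesis are tuned so that $\alpha<\tfrac12$ and so that the chain of inequalities survives down to the boundary values $\ell=1$ and $\epsilon=\tfrac12$. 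None of this is deep, but it is where every hypothesis gets used --- in particular $p\le0.7$, which is exactly what keeps $\ln(1-p)$ within a constant factor of $-p$.
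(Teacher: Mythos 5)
Your proof is correct, and since the paper itself does not prove this proposition (it only cites \cite{BCW}, Lemma 4), there is no internal argument to compare against; your block/transversal construction with the computation $\mb{P}=\bigl(1-(1-p)^\kappa\bigr)^\ell$ and the calibration $\ln(1-p)\ge(\ln 0.3/0.7)p$ for $p\le 0.7$ is exactly the standard argument from the cited sources, and your numerical closing steps (including the $\ell=1$ case and $\alpha=-\ln(0.3)/(4.2\ln 2)<\tfrac12$) check out. One small remark: you verified spreadness in the form $|\{S\in H:Y\subseteq S\}|\le\kappa^{-|Y|}|H|$, which is the intended (standard) definition; the paper's displayed definition has a typo ($S\subseteq Y$ instead of $Y\subseteq S$), as its own use of spreadness in the subsequent lemma confirms.
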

In conclusion, this paper has successfully generalized the asymptotically optimal bound from the $\kappa$-spread setting to the $q$-small setting, using a different proof technique from those used to achieve this bound in the $\kappa$-spread setting \cite{Rao,Tao,Hu,secmommeth}.
\subsection*{Acknowledgements}
We thank Alan Frieze, Lutz Warnke, and the anonymous referees for their helpful comments.

\end{document}